\def\today{\ifcase\month\or
  January\or February\or March\or April\or May\or June\or=
  July\or August\or September\or October\or November\or December\fi
  \space\number\day, \number\year}
 \newtheorem{theorem}{Theorem}
 \newtheorem{lemma}[theorem]{Lemma}
 \newtheorem{proposition}[theorem]{Proposition}
 \theoremstyle{definition}
 \theoremstyle{remark}
    \renewcommand{\d}{\text{\rm d}}
\newcommand{\var}{{\rm Var\,}}
\newcommand{\intav}[1]{\mathchoice {\mathop{\vrule width 6pt height 3 pt depth  -2.5pt
\kern -8pt \intop}\nolimits_{\kern -6pt#1}} {\mathop{\vrule width
5pt height 3  pt depth -2.6pt \kern -6pt \intop}\nolimits_{#1}}
{\mathop{\vrule width 5pt height 3 pt depth -2.6pt \kern -6pt
\intop}\nolimits_{#1}} {\mathop{\vrule width 5pt height 3 pt depth
-2.6pt \kern -6pt \intop}\nolimits_{#1}}}
\begin{document}

\title[On the continuity of maximal operators of convolution type at the derivative level]{On the continuity of maximal operators of convolution type at the derivative level}
\author[Gonz\'{a}lez-Riquelme]{Cristian Gonz\'{a}lez-Riquelme}
\subjclass[2010]{26A45, 42B25, 39A12, 46E35, 46E39, 05C12.}
\keywords{Maximal operators; continuity; bounded variation;}
\address{IMPA - Instituto de Matem\'{a}tica Pura e Aplicada\\
Rio de Janeiro - RJ, Brazil, 22460-320.}
\email{cristian@impa.br}

\allowdisplaybreaks
\numberwithin{equation}{section}

\maketitle

\begin{abstract}
In this paper we study a question related to the continuity of maximal operators of convolution type acting on $W^{1,1}(\mathbb{R})$. We prove that the map $u\mapsto (u^{*})'$ is continuous from $W^{1,1}(\mathbb{R})$ to $L^{1}(\mathbb{R})$, where $u^{*}$ is the maximal function associated to the Poisson kernel, the Heat kernel or a family of kernels related to the fractional Laplacian. This is the first result of this type for a centered maximal operator.   

%We prove that the map $f\mapsto (M_{\phi}f)'$ is continuous from $W^{1,1}(\mathbb{R})$ to $L^{1}(\mathbb{R}),$ where $\phi$ is a kernel associated to some elliptic partial differential equation including the heat and Poisson kernel and kernels associated to the Caffarelli Silvestre equations. 
\end{abstract}
\section{Introduction}
\subsection{A brief historical background} Maximal operators are central objects in analysis. The regularity theory for these operators was initiated by Kinnunen \cite{Kinnunen1997}, who proved that \begin{align}\label{map}f\mapsto Mf\end{align} is bounded from $W^{1,p}(\mathbb{R}^d)$ to itself, where $M$ is the centered Hardy-Littlewood maximal operator and $p>1$. The method presented in that work also allows one to establish the same result for the uncentered Hardy-Littlewood maximal operator $\widetilde{M}$. This continues to hold true at the derivative level when $p=1$
and $n = 1$, i.e. the map $f\mapsto (Mf)'$ is bounded from $W^{1,1}(\mathbb{R})$ to $L^{1}(\mathbb{R}).$ This was proved by Kurka in the work \cite{Kurka2010}, while the same result for $\widetilde{M}$ was established by Tanaka \cite{Tanaka2002} and refined by Aldaz and P\'{e}rez-L\'{a}zaro \cite{AP2007}. The same result for higher dimensional radial functions was proved by Luiro \cite{Luiro2018} and for characteristic higher dimensional functions was obtained by Weigt \cite{weigtchar}, both in the uncentered case. Extending this kind of result to different settings and operators  has been an important line of research the last decades. For general information about this area there is an interesting survey \cite{CarneiroSurvey} by Carneiro.

The continuity for this type of operators has been also extensively studied. Luiro \cite{Luiro2007} proved that \eqref{map} is continuous from $W^{1,p}(\mathbb{R}^d)$ to itself when $p>1$. The methods developed in that work can be adapted to several other maximal operators in the range $p>1.$ The endpoint case $p=1$ is significantly more involved. For the uncentered Hardy-Littlewood maximal operator the continuity of the map
\begin{align}\label{map2}
f\mapsto \left(\widetilde{M}f\right)'
\end{align}
from $W^{1,1}(\mathbb{R})$ to $L^{1}(\mathbb{R})$ was proved by Carneiro, Madrid and Pierce in \cite{CMP2017}. This was later generalized in \cite{grk} and \cite{CGRM} to the $BV(\mathbb{R})$ case and to the higher dimensional radial case, respectively. In the fractional variant the general case was obtained in \cite{beltran2021continuity}, after previous developments made in \cite{BM2019,Madrid2017}. However, the classical centered setting is out of the scope of the previous methods. In this manuscript we aim to shed light in this kind of question. 
\subsection{Maximal operators of convolution type}
Let $\phi:\mathbb{R}^d \mapsto \mathbb{R}_{\ge 0}$ be a radially non increasing function with $$\int_{\mathbb{R}^d}\phi(x)\,\d x=1.$$ We define, as usual, \begin{align}\label{approximationofidentity}
\phi_{t}(x):=\frac{1}{t^d}\phi\left(\frac{x}{t}\right).
\end{align}
For every $f\in L^{1}(\mathbb{R}^d)$ we have $\underset{t\to 0}{\lim}\ |f|*\phi_{t}(\cdot)(x)=|f|(x)$ for a.e. $x\in \mathbb{R}^d$. Then, given an initial datum $u:\mathbb{R}^d\to \mathbb{R}$ we define the extension $\tilde{u}:\mathbb{R}^d\times (0,\infty)\to \mathbb{R}$ as
$$\tilde{u}(x,t)=|u|*\phi_t(x).$$
The maximal operator associated to the kernel $\phi$ is defined as
$$u^{*}(x)=\sup_{t>0}\tilde{u}(x,t),$$ 
where we omit the dependence to $\phi$ as it is clear from the context.
This notion recovers the classical Hardy-Littewood maximal operator by choosing $\phi=\frac{\chi_{B(0,1)}}{|B(0,1)|}.$
The following kernels are the main objects for our study:
\begin{align*}
\varphi_{1}(x) &= \frac{\Gamma \left(\frac{d+1}{2}\right)}{\pi^{(d+1)/2}}\ \frac{1}{(|x|^2 + 1)^{(d+1)/2}} \ \ \ \ \ \ \ {\rm (Poisson \ kernel)}\\
\varphi_{2}(x) & = \frac{1}{(4 \pi )^{d/2}}\ e^{-|x|^2}  \ \ \ \ \ \ \ \ \ \ \ \ \ \ \ \ \ \ \ \ \ \ \ {\rm (Heat \ kernel)}\\
\varphi_{3}^{\alpha}(x)&=C_{d}^{\alpha}\frac{1}{(|x|^2 + 1)^{(d+1-\alpha)/2}} \ \ \ \ \ \ \ \ {\rm (Fractional\ Poisson\ kernel)} ,
\end{align*}
where $0<\alpha<1$ and $C_{d}^{\alpha}$ is such that $\|\varphi_{3}^{\alpha}\|_1=1.$ We notice that $\varphi_{3}^{\alpha}$ is a generalization of $\varphi_1$. We treat them as different objects since some results for $\varphi_1$ were previously obtained.  The kernel $\varphi_3^{\alpha}$ is relevant in the work of Caffarelli and Silvestre \cite{doi:10.1080/03605300600987306}, where its relation to the fractional Laplacian is investigated.
The study of regularity properties for maximal operators associated to smooth kernels was initiated by Carneiro and Svaiter in \cite{CS2013}. There, it was proved the endpoint boundedness of the map 
\begin{align}\label{convolution}
    u\mapsto (u^{*})'
\end{align}
from $W^{1,1}(\mathbb{R})$ to $L^{1}(\mathbb{R})$ where $\phi$ is $\varphi_{1}$ or $\varphi_{2}$. One key step in the proof of these results was to prove that in the set $\{x\in \mathbb{R};u^*(x)>u(x)\}$ the function $u^{*}$ is subharmonic for such kernels. This regular behavior has proven useful also in higher dimensions, where it was used to prove that in fact these maximal operators reduce the $L^{2}$-norm of the gradient \cite{CS2013}. These results were later generalized in \cite{bortz2019sobolev,CFS2015}. In \cite{CGR} the boundedness of \eqref{convolution} in the higher dimensional radial case was established for both $\varphi_{1}$ and $\varphi_{2}$.
%There, the authors consider kernels $\varphi_{a,b}$ such that $u(x,t)$ that are fundamental solutions to the equation 
%$$au_{tt}-bu_{t}+\bigtriangleup_{x}u=0$$
%in $\mathbb{R}^d$. 
%They proved that the associated map \eqref{convolution} is bounded by proving that these operators also satisfy the crucial {\it convexity condition}. 

In our main result we make use of the essential {\it subharmonicity property} to conclude the one-dimensional continuity of these maximal operators at the derivative level, solving a question posed by Carneiro\footnote{Personal communication.}.  
\begin{theorem}\label{1}
Let $\phi\in \{\varphi_{1},\varphi_{2},\varphi_{3}^{\alpha}\}$. Then the map $$u\mapsto (u^{*})'$$ is continuous from $W^{1,1}(\mathbb{R})$ to $L^{1}(\mathbb{R}).$
\end{theorem}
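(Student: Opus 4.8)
The plan is to establish continuity by combining the known boundedness of the map $u\mapsto (u^*)'$ with the subharmonicity structure on the contact-free region. The overall strategy follows the template set in \cite{CMP2017} for the uncentered case: given $u_j\to u$ in $W^{1,1}(\mathbb{R})$, I want to show $(u_j^*)'\to (u^*)'$ in $L^1(\mathbb{R})$. The first reductions are standard. By the boundedness of \eqref{convolution}, the sequence $(u_j^*)'$ is bounded in $L^1$; by passing to subsequences and using that convergence in $W^{1,1}$ gives pointwise a.e. convergence of $u_j$ and $u_j'$, I can also arrange $u_j^*\to u^*$ uniformly (this uses the contraction-type bound $\|u^*-v^*\|_\infty\le \|u-v\|_\infty$ together with the embedding $W^{1,1}\hookrightarrow L^\infty$). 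The real work is to upgrade this to convergence of the derivatives in $L^1$.

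\emph{Decomposition by regions.} Following the partition philosophy, I would split $\mathbb{R}$ into the \emph{contact set} $\{x:u^*(x)=|u|(x)\}$ and its complement, the \emph{disconnecting set} where $u^*>|u|$. On the contact set one expects $(u^*)'=(|u|)'$ a.e., so control here reduces to the convergence $|u_j|'\to|u|'$ coming from $W^{1,1}$-convergence. On the open complementary set, the subharmonicity property established in \cite{CS2013} — namely that $u^*$ is subharmonic (for $\varphi_1,\varphi_2$) or satisfies the analogous sub-mean-value inequality (for $\varphi_3^\alpha$) on $\{u^*>|u|\}$ — forces $u^*$ to have good structural regularity there: on each connected component $u^*$ is, in one dimension, convex-like, so its derivative is monotone and of controlled variation. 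The plan is to use this to show that the positive and negative parts of $(u_j^*)'$ cannot concentrate or oscillate in the limit. The key quantitative handle is the Brezis–Lieb type argument: since $(u_j^*)'\to(u^*)'$ a.e.\ (extracted from the uniform convergence of $u_j^*$ together with the local structure), it suffices to rule out loss of mass, i.e.\ to show $\|(u_j^*)'\|_1\to\|(u^*)'\|_1$, which reduces the problem to an upper-semicontinuity statement for the total variation of $u_j^*$.

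\emph{The crux.} The main obstacle is precisely the possible \emph{escape of mass at the boundary of the contact set}: intervals where $u_j^*>|u_j|$ may shrink to points, merge, or appear in the limit, and across these transitions the derivative $(u_j^*)'$ can jump while remaining bounded in $L^1$, so pointwise convergence alone does not give $L^1$-convergence. To handle this I would exploit the subharmonicity to get a \emph{one-sided} control: on each disconnecting interval the sub-mean-value property pins down the sign behavior of $(u^*)'$ (it is nonnegative then nonpositive, with a single interior maximum), giving an explicit formula for the local contribution to the variation in terms of boundary values of $u^*$, which equal $|u|$ by continuity. Since these boundary values converge (by uniform convergence of $u_j^*$ and of $|u_j|$), the per-component variation is stable under the limit, and a covering/summation argument upgrades the a.e.\ convergence to $L^1$-convergence of the derivatives. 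The technical heart is therefore to make the structure theorem on each component quantitative and uniform in $j$, so that the total-variation budget is accounted for component-by-component and no mass is lost when the combinatorial structure of the components degenerates in the limit.
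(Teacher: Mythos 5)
There is a genuine gap, and it sits exactly where the paper has to work hardest: the contact set. Your plan is to get a.e.\ convergence $(u_j^*)'\to(u^*)'$ and then invoke Brezis--Lieb together with $\|(u_j^*)'\|_1\to\|(u^*)'\|_1$. But the a.e.\ convergence of derivatives is only obtainable on the disconnecting set $D=\{u^*>u\}$, where convexity (from subharmonicity) lets one pass uniform convergence to the derivative level; on the contact set $C=\{u^*=u\}$ no such structure is available, and your assertion that control there ``reduces to $|u_j|'\to|u|'$'' is unjustified, because $C\cap D_j$ can have positive measure and there $(u_j^*)'\neq u_j'$: the components of $D_j$ meeting $C$ may shrink, merge, or proliferate, and $(u_j^*)'$ on them is governed by the convex profile of $u_j^*$ on tiny intervals, not by $u_j'$. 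Your ``crux'' paragraph does name this escape-of-mass phenomenon, but the proposed fix --- per-component variation formulas plus convergence of boundary values --- only handles \emph{finitely many} components whose endpoints stabilize; it does not control the possibly infinite family of small components of $D_j$ buried inside $C$. (Minor but telling: on each component $u^*$ is convex, so it has a single interior \emph{minimum} and the derivative goes from nonpositive to nonnegative; you state the opposite.)

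The missing idea, which is the paper's main novelty, is a bound on $\int|(u_j^*)'-u_j'|$ over those small components that is \emph{stable under subtracting a line}: since $u_j^*-L$ is still convex on a component $(c,d)$ of $D_j$, lies above $u_j-L$, and agrees with it at the endpoints, one gets $\int_{(c,d)}|(u_j^*-L)'|\le\int_{(c,d)}|(u_j-L)'|$ for \emph{every} affine $L$. Approximating $u'$ in $L^1$ by a step function $\sum_i\alpha_i\chi_{(a_i,a_{i+1})}$ and choosing $L$ with slope $\alpha_i$ on each block, this yields $\int|(u_j^*)'-u_j'|\le 2\int|u_j'-\alpha_i|$ summed over all components of $D_j$ not containing any $a_i$, which is $O(\varepsilon)$ uniformly in $j$; the finitely many components containing some $a_i$ are then handled by exactly the boundary-value/convexity argument you sketch. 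Without this linear-perturbation trick (or a substitute for it), the passage from a.e.\ convergence on $D$ to $L^1$ convergence on all of $\mathbb{R}$ does not close.
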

In the case of $\varphi_{3}^{\alpha}$ we first have to prove that the aforementioned map is well defined and bounded. This is obtained by similar methods as the ones developed in \cite{CFS2015,CS2013}. This is explained in Section 2.

The methods developed in the aforementioned works \cite{CGRM,CMP2017,grk} are not enough to conclude Theorem \ref{1}. In those works it is relevant that the maximal operators considered there have the flatness property; that is, the maximal functions have zero derivative a.e. at the points where they coincide with the original functions. This property does not typically hold when dealing with centered maximal operators, so a new approach is required in this case. In order to overcome this difficulty, our strategy is strongly tied with the {\it subharmonicity property} that these kernels satisfy. We use this property in order to obtain a {\it local boundedness} that is stable under linear perturbations. This allows us to discretize some important aspects of the proof. Complementing this with some previous methods developed in \cite{grk} we obtain our result. These new tools are explained in Section 3.     
\section{Preliminaries}
Here we develop the preliminaries for the proof of our theorem. Given $u\in W^{1,1}(\mathbb{R})$ we write its disconnecting set as $$D:=\{x\in \mathbb{R}; u^{*}(x)>u(x)\}.$$   
We say that $\phi\in L^{1}(\mathbb{R})$ has the {\it subharmonicity property} when for any $u\in W^{1,1}(\mathbb{R})$ the associated maximal operator $u^{*}$ is subharmonic in $D$. We notice that, given that we are in the one-dimensional setting, this property implies that $u^{*}$ is convex in $D$.   By \cite[Lemmas 8 and 12]{CS2013} we know that property holds for both $\varphi_1$ and $\varphi_2.$ In the next proposition we establish the same for $\varphi_3^{\alpha}.$
\begin{proposition}
For $\phi=\varphi_3^{\alpha}\in L^{1}(\mathbb{R}^{d}),$ $\alpha\in (0,1),$ we have that $u^{*}$ is continuous in $\mathbb{R}^d$ and subharmonic in the set $\{x\in \mathbb{R}^{d};u^{*}(x)>u(x)\}$ for any $u\in W^{1,1}(\mathbb{R}^{d})\cap C(\mathbb{R}^d).$ Moreover, the map $u\mapsto (u^{*})'$ is well defined and bounded from $W^{1,1}(\mathbb{R})$ to $L^{1}(\mathbb{R}).$
\end{proposition}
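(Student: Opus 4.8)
The plan is to exploit the Caffarelli--Silvestre characterization of $\varphi_3^{\alpha}$ as an extension kernel for the fractional Laplacian. Writing $s=(1-\alpha)/2\in(0,\tfrac12)$ and rescaling, one checks that
\[
(\varphi_3^{\alpha})_t(x)=C_d^{\alpha}\,\frac{t^{\,1-\alpha}}{(|x|^2+t^2)^{(d+1-\alpha)/2}},
\]
which is exactly a constant multiple of the Poisson kernel for the degenerate elliptic operator $\mathrm{div}\bigl(t^{1-2s}\nabla\,\cdot\,\bigr)$ studied in \cite{doi:10.1080/03605300600987306}. Consequently the extension $\tilde u(x,t)=|u|*(\varphi_3^{\alpha})_t(x)$ is, for $t>0$, a classical solution of
\[
\Delta_x\tilde u+\partial_t^2\tilde u+\frac{\alpha}{t}\,\partial_t\tilde u=0 .
\]
First I would record the regularity this entails: for $t>0$ the kernel is smooth and integrable together with all its derivatives, so $\tilde u$ is smooth on $\mathbb{R}^d\times(0,\infty)$, the displayed equation holds pointwise, and (using $u\in C(\mathbb{R}^d)$) one has $\tilde u(x,t)\to|u|(x)$ as $t\to0^+$ locally uniformly, while $\tilde u(x,t)\le\|u\|_1\,t^{-d}\|\varphi_3^{\alpha}\|_\infty\to0$ as $t\to\infty$. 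Continuity of $u^{*}$ then follows by the standard envelope argument: $u^{*}$ is lower semicontinuous as a supremum of continuous functions, and the two limits above confine the effective range of $t$ to a compact set on which the supremum is attained, which yields upper semicontinuity as well.

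For the subharmonicity I would argue in the viscosity sense, which avoids any regularity assumption on the maximizing height. After the harmless reduction to $u\ge0$ (legitimate since $u^{*}=(|u|)^{*}$), one has $\tilde u(\cdot,t)\to u$ as $t\to0^+$ and $D=\{u^{*}>u\}$ is precisely the set where the supremum is attained at an interior $t$. Fix $x_0\in D$, so that $u^{*}(x_0)=\tilde u(x_0,t_0)$ for some $t_0\in(0,\infty)$, whence $\partial_t\tilde u(x_0,t_0)=0$ and $\partial_t^2\tilde u(x_0,t_0)\le0$. Now let $\psi\in C^2$ touch $u^{*}$ from above at $x_0$. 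Since $u^{*}(x)\ge\tilde u(x,t_0)$ for every $x$, with equality at $x_0$, the map $x\mapsto\psi(x)-\tilde u(x,t_0)$ has a local minimum at $x_0$, giving $\Delta_x\tilde u(x_0,t_0)\le\Delta\psi(x_0)$. Evaluating the extension equation at $(x_0,t_0)$ and using $\partial_t\tilde u(x_0,t_0)=0$ annihilates the degenerate term and leaves $\Delta_x\tilde u(x_0,t_0)=-\partial_t^2\tilde u(x_0,t_0)\ge0$, hence $\Delta\psi(x_0)\ge0$, which is subharmonicity of $u^{*}$ in $D$. \emph{The key point}, and the reason the proof survives the passage from $\varphi_1$ to $\varphi_3^{\alpha}$, is that the extra first-order term $\tfrac{\alpha}{t}\partial_t\tilde u$ vanishes automatically at an interior maximum in $t$; this is the one place where the degeneracy of the weight could have caused trouble, and it does not.

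Finally, for the one-dimensional statement that $u\mapsto(u^{*})'$ is well defined and bounded from $W^{1,1}(\mathbb{R})$ to $L^{1}(\mathbb{R})$, I would follow the scheme of \cite{CS2013,CFS2015}, now that subharmonicity---equivalently, \emph{convexity} of $u^{*}$ on each connected component of the open set $D\subset\mathbb{R}$---is available. Off $D$ one has $u^{*}=u$, so $(u^{*})'=u'$ almost everywhere there; on each component interval of $D$ the function $u^{*}$ is convex and continuous, agrees with $u$ at the endpoints, and therefore has a monotone a.e.\ derivative whose $L^{1}$ mass over the interval is controlled by the one-sided derivatives of $u$ at the endpoints. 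Summing these contributions, together with the decay $u^{*}(x)\to0$ as $|x|\to\infty$, yields both $(u^{*})'\in L^{1}(\mathbb{R})$ and a bound of the form $\|(u^{*})'\|_{L^{1}}\le C\|u'\|_{L^{1}}$. I expect the main obstacle to lie here rather than in the subharmonicity: one must justify that the a.e.\ derivative of the continuous function $u^{*}$ is genuinely its weak derivative and that no variation is lost across $\partial D$, matching the one-sided derivatives of $u^{*}$ and $u$ at the boundary of the disconnecting set. These are precisely the technical core of the boundedness proofs in \cite{CS2013,CFS2015}, which I would adapt, the only new input being the convexity furnished above.
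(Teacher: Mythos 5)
Your proof is correct and shares the paper's overall skeleton --- identify $(\varphi_3^{\alpha})_t$ as the Caffarelli--Silvestre extension kernel, derive the degenerate equation $\Delta_x\tilde u+\tfrac{\alpha}{t}\tilde u_t+\tilde u_{tt}=0$, prove subharmonicity of $u^*$ on $\{u^*>u\}$, and then feed this into the scheme of \cite{CS2013,CFS2015} for continuity and the one-dimensional boundedness --- but your subharmonicity step takes a genuinely different route. The paper never touches individual maximizing heights: it invokes the maximum principle for the degenerate operator (\cite[Theorem 3.1]{Gilbarg2001} applies since there is no zeroth-order term) combined with the comparison argument in the proof of \cite[Lemma 7]{CFS2015}, the only new ingredient being the decay $|\tilde u(z,t)|\le\|(\varphi_3^{\alpha})_t\|_{\infty}\|u\|_1\to 0$ as $t\to\infty$, uniformly on compact balls, which closes the comparison region at $t=\infty$. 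You instead argue pointwise: at $x_0\in D$ the supremum is attained at an interior $t_0$ (correct after the reduction to $u\ge 0$, since $\tilde u(x_0,t)\to u(x_0)<u^*(x_0)$ as $t\to 0^+$ and $\tilde u(x_0,t)\to 0$ as $t\to\infty$ while $u^*(x_0)>0$), so $\partial_t\tilde u(x_0,t_0)=0$ and $\partial_t^2\tilde u(x_0,t_0)\le 0$, the degenerate term drops out, and any $C^2$ function $\psi$ touching $u^*$ from above at $x_0$ satisfies $\Delta\psi(x_0)\ge\Delta_x\tilde u(x_0,t_0)=-\partial_t^2\tilde u(x_0,t_0)\ge 0$. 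This is more self-contained and isolates exactly why the first-order term causes no trouble, but as written it establishes subharmonicity only in the viscosity sense; to match the statement and its later use (convexity of $u^*$ on components of $D$ when $d=1$) you should add the standard remark that, for the continuous function $u^*$, viscosity subharmonicity is equivalent to the classical sub-mean-value property. With that one line, your argument is a legitimate and arguably simpler alternative to the paper's citation-based one; the continuity and boundedness portions of your proposal coincide with what the paper does.
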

\begin{proof}
Following \cite[Lemma 7(i)]{CS2013} we can conclude that $u^{*}$ is continuous for $u\in W^{1,1}(\mathbb{R}^d)\cap C(\mathbb{R}^d).$ Therefore, following the proof of \cite[Theorem 2(ii)]{CS2013}, we need to prove the fact that $u^{*}$ is subharmonic in the set $\{u^{*}>u\}$ to conclude the last assertion of our proposition. In order to conclude this subharmonicity, let us observe that, according to \cite[Section 2.4]{doi:10.1080/03605300600987306}, the function $\tilde{u}(\cdot,t):=u*(\varphi_{3}^{\alpha})_{t}(\cdot)$ solves the Cauchy problem \begin{equation*}
\begin{split}
&\bigtriangleup_{x}\tilde{u}+\frac{\alpha}{t}\tilde{u}_t+\tilde{u}_{tt}=0 \ \ \ \ \ \ \ {\rm{for}}\ {(x,t)\in \mathbb{R}^d\times (0,\infty)}\\
    &\ \ \ \ \ \ \tilde{u}(x,0)=u(x) \ \ \ \ \ \ \ \ \ \ \ \ \ \ {\rm{for}}\ {x\in \mathbb{R}^d}.
\end{split}
\end{equation*}
Therefore, by combining \cite[Theorem 3.1]{Gilbarg2001} and the remark thereafter with the proof of \cite[Lemma 7]{CFS2015}, we just need to prove the following: for any compact ball $B_{r}(x_0)$ and $\varepsilon>0$, there exists $t_{\varepsilon}$ big enough such that for any $z\in B_{r}(x_0)$ we have $\tilde{u}(z,t)<\varepsilon$ for any $t>t_{\varepsilon}.$ This claim follows from 
$$|\tilde{u}(z,t)|\le \|(\varphi_3^{\alpha})_{t}\|_{\infty}\|u\|_{1},$$
and the fact that $\|(\varphi_3^{\alpha})_{t}\|_{\infty}\to 0$ when $t\to \infty.$
\end{proof}
Let us recall the following result.
\begin{lemma}[{\cite[Lemma 14]{CMP2017}}]
Let $u\in W^{1,1}(\mathbb{R})$ and $\{u_j\}_{j\ge 1}\subset W^{1,1}(\mathbb{R})$ such that $\|u_j-u\|_{1,1}\to 0$. Then $\||u_j|-|u|\|_{1,1}\to 0.$
\end{lemma}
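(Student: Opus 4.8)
The plan is to split the $W^{1,1}$ norm as
\[
\||u_j|-|u|\|_{1,1}=\||u_j|-|u|\|_1+\|(|u_j|)'-(|u|)'\|_1
\]
and treat the two pieces separately. For the zeroth-order part the pointwise reverse triangle inequality $\big||u_j(x)|-|u(x)|\big|\le |u_j(x)-u(x)|$ gives $\||u_j|-|u|\|_1\le \|u_j-u\|_1\to 0$ at once, so there is nothing to do there. The entire content of the lemma is therefore in the derivative term.

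For that term I would start from the standard chain rule for Sobolev functions: if $v\in W^{1,1}(\mathbb{R})$ then $|v|\in W^{1,1}(\mathbb{R})$ with $(|v|)'=\sgn(v)\,v'$ a.e., where one invokes the companion fact that $v'=0$ a.e.\ on the set $\{v=0\}$. Inserting a cross term,
\begin{align*}
(|u_j|)'-(|u|)' = \sgn(u_j)\,u_j'-\sgn(u)\,u' = \sgn(u_j)\,(u_j'-u') + \big(\sgn(u_j)-\sgn(u)\big)\,u',
\end{align*}
the first summand is controlled in $L^1$ by $\|u_j'-u'\|_1\to 0$ since $|\sgn|\le 1$. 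Everything thus reduces to proving $\big\|\big(\sgn(u_j)-\sgn(u)\big)\,u'\big\|_1\to 0$.

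For this last quantity I would pass to subsequences, using that a sequence converges iff every subsequence has a further subsequence converging to the same limit. Along a subsequence the $L^1$ convergences $u_j\to u$ and $u_j'\to u'$ upgrade to pointwise a.e.\ convergence. I then split the line into $\{u\neq 0\}$ and $\{u=0\}$: on $\{u\neq 0\}$, the fact that $u_j(x)\to u(x)\neq 0$ forces $\sgn(u_j(x))\to \sgn(u(x))$ for a.e.\ such $x$, so the integrand tends to $0$ pointwise a.e.\ there; on $\{u=0\}$ the Sobolev fact $u'=0$ a.e.\ makes the integrand vanish identically a.e. Hence $\big(\sgn(u_j)-\sgn(u)\big)\,u'\to 0$ a.e., and since $\big|\big(\sgn(u_j)-\sgn(u)\big)\,u'\big|\le 2|u'|\in L^1(\mathbb{R})$, dominated convergence yields the claim along the subsequence. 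Combining with the easy $L^1$ part gives $\||u_j|-|u|\|_{1,1}\to 0$ along the subsequence, and the subsequence principle promotes this to convergence of the full sequence.

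The only genuinely delicate point is the second summand, where the discontinuity of $\sgn$ blocks a direct limit; this is exactly where the structural input $u'=0$ a.e.\ on $\{u=0\}$ is indispensable, as it neutralizes the set on which $\sgn(u_j)$ may fail to stabilize. The subsequence-extraction device is the mechanism that converts the hypothesized $L^1$ convergence into the pointwise a.e.\ convergence required to run dominated convergence.
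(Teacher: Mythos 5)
Your proof is correct and complete; the paper itself does not prove this lemma but simply imports it from \cite[Lemma 14]{CMP2017}, and your argument (chain rule $(|v|)'=\sgn(v)v'$, the cross-term decomposition, the fact that $u'=0$ a.e.\ on $\{u=0\}$, dominated convergence along subsequences) is essentially the standard proof given in that reference. The only cosmetic remark is that in one dimension $W^{1,1}$ convergence already gives uniform convergence of $u_j$ to $u$, so the a.e.\ convergence of $u_j$ needs no subsequence extraction, though your subsequence device is perfectly valid as written.
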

The previous result allow us to always assume that the $u_j$ and $u$ are nonnegative, a simplification that we adopt henceforth.
%We observe the following relevant property about these maximal operators.
%\begin{lemma}
%For any $\phi$ having the {\it convexity property} and $u\in W^{1,1}(\mathbb{R})$ the associated maximal function $u^{*}$ is continuous, weakly differentiable and 
%$$\int_{\mathbb{R}} |(u^{*})'|\le \int_{\mathbb{R}} |u'|.$$
%\end{lemma}
%\begin{proof}
%It follows as in \cite[Lemma 7]{CS2013} and \cite[Theorem 2(ii)]{CS2013}.
%\end{proof}
Now we prove a general statement about the uniform convergence of maximal functions. 
\begin{proposition}\label{uniformconvergence}
Let $u_j\to u$ in $W^{1,1}(\mathbb{R}).$ Then $$u_j^{*}\to u^{*}$$ uniformly.
\end{proposition}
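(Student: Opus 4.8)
The plan is to establish the stronger quantitative bound $\|u_j^*-u^*\|_\infty\to 0$ by a direct sublinearity argument, exploiting the fact that in one dimension the $W^{1,1}$ norm controls the $L^\infty$ norm. By the preceding Lemma we may assume that all the $u_j$ and $u$ are nonnegative, so that $u^*(x)=\sup_{t>0}(u*\phi_t)(x)$ and likewise for each $u_j^*$; the same Lemma guarantees that the convergence $\|u_j-u\|_{1,1}\to 0$ is preserved under this reduction.

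First I would record the pointwise sublinearity estimate. Using the elementary inequality $|\sup_t a(t)-\sup_t b(t)|\le \sup_t|a(t)-b(t)|$ with $a(t)=(u_j*\phi_t)(x)$ and $b(t)=(u*\phi_t)(x)$, together with the nonnegativity of $\phi_t$, one obtains for every $x$
\begin{equation*}
|u_j^*(x)-u^*(x)|\le \sup_{t>0}\big|\big((u_j-u)*\phi_t\big)(x)\big|\le \sup_{t>0}\big(|u_j-u|*\phi_t\big)(x)=(u_j-u)^*(x),
\end{equation*}
where the last expression is the maximal function of $u_j-u$ in the notation of the paper.

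Next I would bound the right-hand side uniformly. Since each $\phi_t$ is a probability density, $\big(|u_j-u|*\phi_t\big)(x)\le \|u_j-u\|_\infty$ for all $x$ and $t$, hence $\|(u_j-u)^*\|_\infty\le \|u_j-u\|_\infty$. Finally, the one-dimensional Sobolev embedding $W^{1,1}(\R)\hookrightarrow L^\infty(\R)$ --- concretely $\|g\|_\infty\le \tfrac12\|g'\|_1$ for $g\in W^{1,1}(\R)$, valid because such $g$ is (up to a null set) absolutely continuous and vanishes at $\pm\infty$ --- yields
\begin{equation*}
\|u_j^*-u^*\|_\infty\le \|u_j-u\|_\infty\le \tfrac12\|u_j'-u'\|_1\le \|u_j-u\|_{1,1}\to 0,
\end{equation*}
which is exactly the claimed uniform convergence.

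The argument is essentially elementary, so I do not expect a substantial obstacle; the only two points requiring care are the reduction to nonnegative data (supplied by the Lemma, which ensures that $\|u_j-u\|_{1,1}\to 0$ survives the passage to absolute values) and the justification of the embedding, which hinges on the decay at infinity that is automatic for $W^{1,1}$ functions on the whole line but would fail, for instance, on a bounded interval.
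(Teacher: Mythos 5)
Your proposal is correct and follows essentially the same route as the paper: both reduce the claim to the chain $\|u_j^*-u^*\|_\infty\le\|u_j-u\|_\infty\le\|u_j-u\|_{1,1}$ via the embedding $W^{1,1}(\mathbb{R})\hookrightarrow L^\infty(\mathbb{R})$. The only (cosmetic) difference is that you invoke the sublinearity of suprema directly, whereas the paper compares the two maximal functions at their respective (near-)optimal times, which amounts to the same estimate.
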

\begin{proof}
Let $x\in \mathbb{R},$ and let $t_x, t_{x,j}\ge 0$ such that $\tilde{u}(x,t_x)=u^{*}(x)$ and $\tilde{u}_{j}(x,t_{x,j})=u_j^{*}(x)$, where we use the notation $\tilde{u}(x,0):=u(x)$. Then \begin{align*}
|u_{j}^{*}(x)-u^{*}(x)|&=\max\{u_{j}^{*}(x)-u^{*}(x),u^{*}(x)-u_{j}^{*}(x)\}\\
&\le \max\{\tilde{u}_{j}(x,t_{x,j})-\tilde{u}(x,t_{x,j}),\tilde{u}(x,t_x)-\tilde{u}_j(x,t_{x})\}\\
&\le \|u_{j}-u\|_{\infty}\\
&\le \|u_{j}-u\|_{1,1}.
\end{align*}
\end{proof}
In the following we assume $\phi\in \{\varphi_1,\varphi_2,\varphi_3^{\alpha}\}$ and that $d=1$. Recall that in that case $u^{*}$ is weakly differentiable and continuous. In the next lemma we reduce our analysis to a bounded interval.
\begin{lemma}\label{compactness}
If $u_j\to u$ in $W^{1,1}(\mathbb{R})$, for every $\varepsilon>0$ there exist $R>0$ and there exists $j$ big enough such that we have
$$\int_{[-R,R]^c}\left|(u_{j}^{*})'\right|+\left|(u^{*})'\right|<\varepsilon$$
\end{lemma}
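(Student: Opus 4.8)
The plan is to reduce the tail integral to a tail total variation and to exploit the convexity of $u^{*}$ on the disconnecting set $D$. Since $u^{*}$ is continuous and locally absolutely continuous, $\int_{[-R,R]^c}|(u^{*})'|=\var_{[-R,R]^c}(u^{*})$, and likewise for each $u_j$. Writing $D\cap(R,\infty)=\bigcup_k(a_k,b_k)$ as a disjoint union of open intervals, on each $(a_k,b_k)$ the function $u^{*}$ is convex with $u^{*}(a_k)=u(a_k)$ and $u^{*}(b_k)=u(b_k)$ by continuity at $\partial D$, while $u^{*}=u$ on the contact set and $u^{*}\ge u$ everywhere (recall that we may assume $u\ge 0$). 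The same decomposition is carried out on $(-\infty,-R)$.

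The key local estimate is that on a bump interval $(a,b)$ entirely contained in $[-R,R]^c$, convexity gives $\var_{(a,b)}(u^{*})=u^{*}(a)+u^{*}(b)-2\min_{[a,b]}u^{*}=u(a)+u(b)-2\min_{[a,b]}u^{*}$, and since $u^{*}\ge u$ this is at most $u(a)+u(b)-2\min_{[a,b]}u\le \var_{(a,b)}(u)=\int_{(a,b)}|u'|$. Summing these bounds over all fully contained bumps together with the contact pieces (where $u^{*}=u$) yields a contribution at most $\int_{[-R,R]^c}|u'|$, by additivity of the variation along the natural partition of $[-R,R]^c$. At most one bump interval can straddle each of the points $R$ and $-R$; on such a straddling interval the restriction of the convex function $u^{*}$ has variation at most the sum of its two endpoint values, hence at most $2\sup_{|x|\ge R}u^{*}$. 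Collecting these facts gives the master inequality
\begin{align*}
\int_{[-R,R]^c}\left|(v^{*})'\right|\le \int_{[-R,R]^c}|v'|+4\sup_{|x|\ge R}v^{*},
\end{align*}
valid for every nonnegative $v\in W^{1,1}(\mathbb{R})$, in particular for $v=u$ and $v=u_j$.

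It then remains to make both terms on the right small uniformly. For the first term, $u\in W^{1,1}(\mathbb{R})$ gives $\int_{[-R,R]^c}|u'|\to 0$ as $R\to\infty$, and for $u_j$ we use $\int_{[-R,R]^c}|u_j'|\le \int_{[-R,R]^c}|u'|+\|u_j'-u'\|_1$ together with $\|u_j-u\|_{1,1}\to 0$. For the second term I would invoke the classical pointwise domination of a convolution maximal operator with radially nonincreasing integrable kernel by the Hardy--Littlewood maximal operator, $u^{*}\le Mu$; since $u\in L^1(\mathbb{R})$ this forces $u^{*}(x)\to 0$ as $|x|\to\infty$, so $\sup_{|x|\ge R}u^{*}\to 0$ as $R\to\infty$. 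The corresponding quantity for $u_j$ is controlled via Proposition \ref{uniformconvergence}: $\sup_{|x|\ge R}u_j^{*}\le \sup_{|x|\ge R}u^{*}+\|u_j^{*}-u^{*}\|_\infty$. Thus, given $\varepsilon>0$, I first fix $R$ large so that $\int_{[-R,R]^c}|u'|$ and $\sup_{|x|\ge R}u^{*}$ are both small, and then take $j$ large so that $\|u_j'-u'\|_1$ and $\|u_j^{*}-u^{*}\|_\infty$ are small; the master inequality then yields the claim for both $u$ and $u_j$ simultaneously.

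The main obstacle is precisely the straddling bump intervals, whose variation cannot be charged to the tail of $|u'|$ and which are shared across all $u_j$ for a common $R$; the resolution is the decay of $u^{*}$ at infinity combined with the uniform convergence $u_j^{*}\to u^{*}$ already furnished by Proposition \ref{uniformconvergence}. One should also verify carefully the measure-theoretic bookkeeping behind the variation decomposition---that $(u^{*})'=u'$ almost everywhere on the contact set and that the variation is additive over the contact/bump partition---which is exactly where the continuity and local absolute continuity of $u^{*}$ are used.
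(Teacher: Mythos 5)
Your proof is correct and follows essentially the same route as the paper: both decompose the tail of the disconnecting set into bump intervals, use convexity of $u^{*}$ on the interior bumps to charge their variation to $\int_{[-R,R]^c}|u'|$ plus a boundary correction for the (at most two) straddling intervals --- the paper's correction is $|u^{*}(\pm R)-u(\pm R)|$ where yours is $4\sup_{|x|\ge R}u^{*}$ --- and then invoke Proposition \ref{uniformconvergence} to transfer the bound uniformly to the $u_j$. One small repair: the implication ``$u\in L^1\Rightarrow Mu(x)\to 0$ as $|x|\to\infty$'' is false in general (tall thin spikes escaping to infinity give an unbounded $Mu$), but your conclusion $\sup_{|x|\ge R}u^{*}\to 0$ does hold here because $u\in W^{1,1}(\mathbb{R})$ is continuous and vanishes at infinity, so that $u^{*}(x)\le \sup_{|y-x|\le L}u(y)+\|u\|_1/(2L)$ for every $L>0$.
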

\begin{proof}
We prove that for any function $w\in W^{1,1}(\mathbb{R})$ and $R>0$ we have $$\int_{[R,\infty)}\left|(w^{*})'\right|\le |w^{*}(R)-w(R)|+\int_{[R,\infty)}\left|w'\right|,$$ the other required estimate follows by symmetry.
If we write $\{x\in(R,\infty); w^{*}(x)>w(x)\}=\bigcup_{i=1}^{\infty}(a_i,b_i)$ we have $w^{*}(a_i)=w(a_i)$ and $w^{*}(b_i)=w(b_i)$ unless $a_i=R.$ If $a_i\neq R$ we have $\int_{(a_i,b_i)}|(u^{*})'|\le \int_{(a_i,b_i)}|(u)'|$ by the {\it subharmonicity property}. By the same property we have that, if $a_i=R$ and $w^{*}$ attains its minimum for the interval $(a_i,b_i)$ at the point $c_i$, we have  (with possibly $b_i=\infty$)
\begin{align*}
\int_{(R,b_i)}|(w^{*})'|&=w^{*}(R)-2w^{*}(c_i)+w^{*}(b_i)\\
&\le |w^{*}(R)-w(R)|+w(R)-2w(c_i)+w(b_i)\le |w^{*}(R)-w(R)|+\int_{(R,b_i)}|w'|,
\end{align*}
from where we conclude our claim. Now, in order to conclude our lemma we take  $R$ such that $\int_{[R,R]^c}|w'|\le \frac{\varepsilon}{4}$, $w^{*}(R)-w(R)+w^{*}(-R)-w(-R)<\frac{\varepsilon}{4}$ and $j$ such that $\|w_j'-w'\|_{1}<\frac{\varepsilon}{4}$ and $w_j^{*}(R)-w_j(R)+w_j^{*}(-R)-w_j(-R)<\frac{\varepsilon}{4},$ where in this last choosing we use Proposition \ref{uniformconvergence}. 
\end{proof}
Another important ingredient in our strategy is presented in the next proposition, which is inspired in \cite[Proposition 8]{grk}. For a partition $\mathcal{P}:=\{a_1<\dots<a_m\}$ and $w:\mathbb{R}\to \mathbb{R}$ we define 
$$\var(w,\mathcal{P}):=\sum_{i=1}^{m-1}|w(a_{i+1})-w(a_{i})|.$$
\begin{proposition}\label{grk}
Let $u_j\to u\in W^{1,1}(\mathbb{R}).$%, and let $\phi\in \{\varphi_1,\varphi_2,\varphi_{3}^{\alpha}\}$.
Then $$\|(u_j^{*})'\|_{1}\to \|(u^{*})'\|_{1}$$ 
\end{proposition}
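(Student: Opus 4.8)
The plan is to prove the convergence $\|(u_j^{*})'\|_1 \to \|(u^{*})'\|_1$ by combining the reduction to a compact interval from Lemma \ref{compactness} with a careful analysis of the total variation of $u^{*}$ on that interval, exploiting the \emph{subharmonicity property}. First I would fix $\varepsilon>0$ and use Lemma \ref{compactness} to choose $R>0$ so that the tail integrals of $|(u^{*})'|$ and $|(u_j^{*})'|$ outside $[-R,R]$ are negligible for all large $j$. On the remaining compact interval, the key observation is that because $u^{*}$ is convex on each connected component of the disconnecting set $D$ and equals $u$ outside $D$, the quantity $\|(u^{*})'\|_{L^1[-R,R]}$ can be expressed in terms of the values of $u^{*}$ and its relationship to $u$ at finitely many distinguished points (the endpoints of the components of $D$ and the interior minima of $u^{*}$ on those components).

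Next, the strategy is to capture $\int_{-R}^{R} |(u^{*})'|$ as a supremum of sums of the form $\var(u^{*},\mathcal{P})$ over partitions $\mathcal{P}$ of $[-R,R]$, or more precisely to approximate it from below by such a variation sum up to error $\varepsilon$. Having fixed a good partition $\mathcal{P}$ that nearly realizes the variation of $u^{*}$, I would invoke Proposition \ref{uniformconvergence}, which gives $u_j^{*}\to u^{*}$ uniformly; this yields $\var(u_j^{*},\mathcal{P})\to \var(u^{*},\mathcal{P})$ for the \emph{fixed} partition as $j\to\infty$. Since $\var(u_j^{*},\mathcal{P})\le \int_{-R}^{R}|(u_j^{*})'|$ always, uniform convergence immediately delivers the lower semicontinuity bound
\[
\|(u^{*})'\|_1 \le \liminf_{j\to\infty} \|(u_j^{*})'\|_1 .
\]

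The harder direction is the matching upper bound $\limsup_{j\to\infty}\|(u_j^{*})'\|_1 \le \|(u^{*})'\|_1$, and this is where the subharmonicity structure must be used quantitatively rather than merely as a semicontinuity tool. The idea is to bound $\int_{-R}^{R}|(u_j^{*})'|$ from above using the convexity of $u_j^{*}$ on components of its own disconnecting set $D_j$: on each such component the variation equals twice the drop from the endpoints to the interior minimum, so it is controlled by the boundary values $u_j^{*}=u_j$ together with $\int|u_j'|$ over that component. The main obstacle will be that the components of $D_j$ can be numerous and can shift or fragment as $j$ varies, so one cannot simply match them component-by-component to those of $D$. To handle this I expect to argue that, off a controlled small set, the components of $D_j$ are close (in an endpoint/length sense) to those of $D$ using the uniform convergence $u_j^{*}\to u^{*}$ and $u_j\to u$ in $W^{1,1}$, and then sum the per-component convexity estimates. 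Controlling the contribution of the endpoints $u_j^{*}(a_i)=u_j(a_i)$ uniformly, and ruling out accumulation of variation on a vanishing set of new components, is the delicate point; the \emph{subharmonicity property} is precisely what prevents the derivative from concentrating there, since on each component the variation is rigidly tied to boundary data and the $L^1$-norm of $u_j'$, both of which converge. Combining both inequalities yields $\|(u_j^{*})'\|_1\to\|(u^{*})'\|_1$.
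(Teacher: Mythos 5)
Your reduction to a compact interval via Lemma \ref{compactness} and your lower bound $\|(u^{*})'\|_1 \le \liminf_j \|(u_j^{*})'\|_1$ (via the variation characterization of the $L^1$-norm of the derivative and uniform convergence on a fixed partition) are correct and coincide with the paper's argument. The problem is the upper bound, where your proposal stops at exactly the point where the real work begins. You correctly identify that the components of $D_j$ can be numerous, can shift, and can fragment, and you propose to handle this by showing that ``off a controlled small set, the components of $D_j$ are close to those of $D$'' and that subharmonicity ``prevents the derivative from concentrating'' on new components. Neither claim is substantiated, and the first is not something you should expect to prove: there is no stability of the disconnecting set under $W^{1,1}$ perturbations in an endpoint/length sense, and the paper never attempts such a statement. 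Moreover, the per-component convexity estimate you invoke — variation on a component equals twice the drop to the interior minimum, hence is bounded by $\int |u_j'|$ over that component — only yields $\|(u_j^{*})'\|_1 \le \|u_j'\|_1 \to \|u'\|_1$, which is strictly weaker than the needed bound by $\|(u^{*})'\|_1$ whenever the maximal operator genuinely reduces variation. So the sharp upper bound does not follow from summing your per-component estimates, no matter how the components are matched.

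The paper's resolution is structurally different and avoids any comparison between $D_j$ and $D$. It fixes one coarse partition $\mathcal{P}$ that nearly realizes \emph{both} $\var(u)$ and $\var(u^{*})$, refines it to a $j$-dependent partition $\widetilde{\mathcal{P}}(j)$ nearly realizing $\var(u_j^{*})$, and then proves the transfer identity \eqref{localization}: every interior local extremum of $u_j^{*}$ recorded by $\widetilde{\mathcal{P}}(j)$ can be relocated to a nearby point where $u_j$ attains the \emph{same value} (a strict local maximum of $u_j^{*}$ cannot sit inside a component of $D_j$ by convexity, and interior local minima are reached by $u_j$ via the intermediate value theorem since $u_j \le u_j^{*}$). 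This converts the excess oscillation of $u_j^{*}$ beyond $\mathcal{P}$ into excess oscillation of $u_j$ beyond $\mathcal{P}$, which is at most $2\varepsilon$ because $\mathcal{P}$ nearly realizes $\var(u_j)$ by \eqref{epsilon1}. That combinatorial relocation step is the missing idea in your proposal; without it, or some substitute of comparable strength, the upper bound does not close.
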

\begin{proof}
 By Lemma \ref{compactness} is enough to prove that, for any $(a,b)$, we have $$\underset{j\to \infty}{\lim} \int_{[a,b]}\left|(u_{j}^{*})'\right|=\int_{[a,b]}\left|(u^{*})'\right|.$$ Since fo any $w\in W^{1,1}(\mathbb{R})$ we have $$\int_{[a,b]}|w'|=\underset{\mathcal{P}\subset [a,b]}{\sup}\var(w,\mathcal{P}),$$ by Fatou's lemma we obtain $$\underset{j\to \infty}{\liminf} \int_{[a,b]}\left|(u_{j}^{*})'\right|\ge \int_{[a,b]}\left|(u^{*})'\right|.$$ Now, given $\varepsilon>0,$ we prove that $$\underset{j\to \infty}{\limsup} \int_{\mathbb{R}}\left|(u_{j}^{*})'\right|\le \int_{[a,b]}\left|(u^{*})'\right|+3\varepsilon.$$ 
 Let us take a partition $\mathcal{P}=\{a=a_0<a_1\dots<a_{K}=b\}$ such that 
\begin{align*}
\var\left(u,\mathcal{P}\right)>\int_{[a,b]}\left|u'\right|-\varepsilon
\end{align*}
and
\begin{align*}
\var\left(u^{*},\mathcal{P}\right)>\int_{[a,b]}\left|(u^{*})'\right|-\varepsilon.
\end{align*}
By uniform convergence we have 
\begin{align}\label{epsilon1}
\var\left(u_j,\mathcal{P}\right)>\int_{[a,b]}\left|(u_{j})'\right|-2\varepsilon 
\end{align}
and 
\begin{align}\label{epsilon2}
\var\left(u_j^{*},\mathcal{P}\right)>\int_{[a,b]}\left|(u_{j}^{*})'\right|-2\varepsilon
\end{align}
for $j$ big enough. Now, let us consider $\widetilde{\mathcal{P}}=\widetilde{\mathcal{P}}(j)\supset \mathcal{P}$ with $\widetilde{\mathcal{P}}\subset [a,b]$ such that
\begin{align*}
\var\left(u_{j}^{*},\widetilde{\mathcal{P}}\right)>\int_{[a,b]}\left|(u_j^{*})'\right|-\varepsilon.
\end{align*}
Without loss of generality we can assume that $\widetilde{\mathcal{P}}$ is such that $[a_i,a_{i+1}]\cap \widetilde{\mathcal{P}}=\{a_{i-1}=a_{i,0}<\dots<a_{i,n_i}=a_i\}$ satisfies that $\text{sign}(u^{*}_{i,k}-u^{*}_{i,k+1})=-\text{sign}(u^{*}_{i,k+1}-u^{*}_{i,k+2})$ for every $k=0,\dots,n_{i}-2$. For each such $i$ we denote $\widetilde{\mathcal{P}}_{i}=\{a_{i,1},\dots,a_{i,n_{i}-1}\}$ and claim that it is possible to find another partition $\widetilde{\mathcal{P}}_{i}^{*}=\{a_{i,1}^{*},\dots,a_{i,n_{i}-1}^{*}\}\subset (a_{i-1},a_i)$ such that 
\begin{align}\label{localization}
\var\left(u_{j},\widetilde{\mathcal{P}}_{i}^{*}\right)-\var\left(u_{j},\{a_{i,1}^{*},a_{i,n_i-1}^{*}\}\right)=\var\left(u_{j}^{*},\widetilde{\mathcal{P}}_{i}\right)-\var\left(u_{j}^{*},\{a_{i,1}.a_{i,n_{i}-1}\}\right) 
\end{align}
For $n_i\le 2$ it follows by convention. For $n_i\ge 3$, by the {\it subharmonicity property} if $k\in \{i,\dots,n_{i}-1\}$ is such that $u_{j}^{*}(a_{i,k})>\max \{u_{j}^{*}(a_{i,k-1}),u_{j}^{*}(a_{i,k+1})\},$ there exists $y\in (a_{i,k-1},a_{i,k+1})$ such that $u_{j}(y)=u_{j}^{*}(a_{i,k})$. We choose $a_{i,k}^{*}=y$ in that case. Now, if $u_{j}^{*}(a_{i,k})<\min\{u_{j}^{*}(a_{i,k+1}^{*}),u_{j}^{*}(a_{i,k-1}^{*})\}$ (where $a_{i,0}^{*}=a_{i,0}$ and $a_{i,n_i}^{*}=a_{i,n_i}$) and $k<n_{i}-1$, since $u_{j}(a_{i,k})\le u_{j}^{*}(a_{i,k})<u_{j}(a_{i,k+1}^{*})$ by continuity there exists $y\in (a_{i,k},a_{i,k+1})$ such that $u_j(y)=u_{j}^{*}(a_{i,k})$. We choose $y=a_{i,k}^{*}$. The case $k=n_{i}-1$ is done analogously, but instead choosing $y\in (a_{i,n_{i}-2},a_{i,n_{i}-1})$ with the same property. From here \eqref{localization} follows.
Now, we apply \eqref{localization} in order to obtain the following inequality
\begin{align*}
\int_{[a,b]}\left|(u_j)'\right|-\var(u_j,\mathcal{P})&\ge 
\var\Big(u_j, \mathcal{P} \cup \bigcup_{i=1}^K \widetilde{\mathcal{P}}_{i}^{*}\Big) - \var\Big(u_j, \mathcal{P} \cup \bigcup_{i=1}^K \{a_{i,1}^{*},a_{i,n_i-1}^{*}\} \Big)
\\
&=\sum_{i=1}^{K}\var\Big(u_j,\widetilde{\mathcal{P}}_{i}^{*}\Big)-\var\Big(u_j,\{a_{i,1}^{*},a_{i,n_{i}-1}^{*}\}\Big)\\
&\ge \sum_{i=1}^{K} \var\left(u_j^{*},\widetilde{\mathcal{P}}_{i}\right)-\var\left(u_j^{*},\{a_{i,1},a_{i,n_i-1}\}\right)\\
&\ge \var\Big(u_{j}^{*},\widetilde{\mathcal{P}}\Big)-\var\Big(u_{j}^{*},\widetilde{\widetilde{\mathcal{P}}}\Big),
\end{align*}
where $\widetilde{\widetilde{\mathcal{P}}}:=\left\{a_{i,k};i\in \{1,\dots, K\},k\in \{0,1,n_i-1,n_i\}\right\}.$ Notice that $|\widetilde{\widetilde{\mathcal{P}}}|\le 3K+1,$ therefore:
$$\var\left(u_j^{*},\widetilde{\widetilde{\mathcal{P}}}\right)<\var\left(u^{*},\widetilde{\widetilde{\mathcal{P}}}\right)+12 K \|u_j-u\|_{\infty}<\int_{(a,b)}|(u^{*})'|+\varepsilon$$
for $j$ big enough. Combining these estimates with \eqref{epsilon2}, we get 
$$\int_{[a,b]}\left|(u_j)'\right|-\var\left(u_j,\mathcal{P}\right)>\int_{[a,b]}\left|(u_{j}^{*})'\right|-\int_{[a,b]}\left|(u^{*})'\right|-\varepsilon.$$
Then, we have by \eqref{epsilon1} that $$\int_{[a,b]}|(u_{j}^{*})'|\le \int_{[a,b]}|(u^{*})'|+3\varepsilon,$$
from where we conclude.
\end{proof}
Now we state a classical property about convergence of convex functions.
\begin{lemma}\label{convexityconvergence}
Let $\{w_j\}_{j\in \mathbb{N}}$ and $(l_{j},r_j)\subset \mathbb{R}$ with $w_j:\mathbb{R}\to \mathbb{R}$ such that $w_j$ is convex in $(l_{j},r_{j})$ for each $j\in \mathbb{N}$. Assume that $\underset{j\to \infty}{\lim} l_j=l$ and $\underset{j\to \infty}{\lim} r_j=r$ and that $w_{j}\to w$ uniformly. Then $w$ is convex in $(l,r)$ and $$\underset{j\to \infty}{\lim} w_{j}'(x)=w'(x),$$ for a.e. $x\in (l,r).$   
\end{lemma}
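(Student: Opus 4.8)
The plan is to prove the two assertions separately, both reducing to the elementary differentiation theory of convex functions together with the stability of the moving intervals $(l_j,r_j)$.

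\textbf{Convexity of the limit.} First I would fix any two points $x_0<x_1$ in $(l,r)$ and a parameter $\lambda\in[0,1]$. Because $l_j\to l$ and $r_j\to r$, there is $j_0$ such that $[x_0,x_1]\subset(l_j,r_j)$ for all $j\ge j_0$; for such $j$ the convexity of $w_j$ on $(l_j,r_j)$ gives
\[
w_j\bigl(\lambda x_0+(1-\lambda)x_1\bigr)\le \lambda\, w_j(x_0)+(1-\lambda)\, w_j(x_1).
\]
Letting $j\to\infty$ and using that $w_j\to w$ uniformly (pointwise convergence already suffices here) yields the same inequality for $w$. Since $x_0,x_1,\lambda$ were arbitrary, $w$ is convex on $(l,r)$, and in particular $w$ (and each $w_j$) is differentiable off an at most countable set.

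\textbf{Convergence of the derivatives.} Next I would discard a null set: each convex function $w_j$ fails to be differentiable on an at most countable subset of $(l_j,r_j)$, and $w$ fails on an at most countable subset of $(l,r)$; the union of all these exceptional sets is countable, hence has measure zero. Let $E\subset(l,r)$ be its complement, a full-measure set on which $w$ and every $w_j$ are differentiable. Fix $x\in E$ and $h>0$ small enough that $[x-h,x+h]\subset(l,r)$, and take $j$ large enough that $[x-h,x+h]\subset(l_j,r_j)$. The three-slopes inequality for the convex function $w_j$ gives
\[
\frac{w_j(x)-w_j(x-h)}{h}\le w_j'(x)\le \frac{w_j(x+h)-w_j(x)}{h}.
\]
Taking $\liminf_{j\to\infty}$ of the left-hand inequality and $\limsup_{j\to\infty}$ of the right-hand one, and using $w_j\to w$ pointwise, I obtain
\[
\frac{w(x)-w(x-h)}{h}\le \liminf_{j\to\infty} w_j'(x)\le \limsup_{j\to\infty} w_j'(x)\le \frac{w(x+h)-w(x)}{h}.
\]
Finally, letting $h\to 0^{+}$ and invoking the differentiability of $w$ at $x$ squeezes both the $\liminf$ and the $\limsup$ to $w'(x)$, so $\lim_{j\to\infty} w_j'(x)=w'(x)$ for every $x\in E$, i.e.\ for a.e.\ $x\in(l,r)$.

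\textbf{Where the care is needed.} There is no genuine analytic obstacle; the entire content is classical convexity. The only points requiring attention are bookkeeping. First, one must use $l_j\to l$ and $r_j\to r$ to guarantee that the moving intervals $(l_j,r_j)$ eventually contain each fixed compact subinterval $[x_0,x_1]$ or $[x-h,x+h]$, so that convexity of $w_j$ is actually available there. Second, the order of the two limits is essential: one sends $j\to\infty$ first, converting the difference quotients of $w_j$ into those of $w$, and only afterward lets $h\to 0^{+}$ to recover $w'(x)$. Keeping these two ingredients straight, the squeeze argument closes the proof.
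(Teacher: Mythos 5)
Your proof is correct and follows essentially the same route as the paper: the paper establishes convexity of $w$ via the same limiting argument on the convexity inequality and then cites Rockafellar's Theorem 25.7 for the a.e.\ convergence of derivatives, while your three-slope squeeze argument is precisely the standard proof of that cited theorem, written out in full. The only substantive difference is that your version is self-contained where the paper defers to a reference.
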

\begin{proof}
For $a,b\in (l,r),$ then for $j$ big enough $a,b\in [l_j,r_j]$ and therefore $w_j(\frac{a+b}{2})\le \frac{w_j(a)+w_j(b)}{2}$. Then by the pointwise convergence we have $w(\frac{a+b}{2})\le \frac{w(a)+w(b)}{2}$ from where the convexity follows. Then, the absolutely continuity of $w$ in $(l,r)$ follows. The last claim follows as in \cite[Theorem 25.7]{rockafellar-1970a}.
\end{proof}
The last preliminary lemma is the following.
\begin{lemma}\label{pointwisederivative}
Let $\phi\in \{\varphi_1,\varphi_2,\varphi_3^{\alpha}\}$. If $u_j\to u$ in $W^{1,1}(\mathbb{R})$, then $$(u_{j}^{*})'(x)\to (u^{*})'(x)$$ for a.e. $x\in D.$
\end{lemma}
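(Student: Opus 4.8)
The plan is to combine the uniform convergence supplied by Proposition \ref{uniformconvergence} with the convexity coming from the \emph{subharmonicity property}, localize onto compact subintervals of $D$, and then invoke Lemma \ref{convexityconvergence}. First I would record that, since $u^{*}$ and $u$ are continuous, the set $D=\{u^{*}>u\}$ is open and therefore a countable disjoint union of open intervals. Because $W^{1,1}(\mathbb{R})\hookrightarrow L^{\infty}(\mathbb{R})$ one has $\|u_j-u\|_{\infty}\to 0$, and by Proposition \ref{uniformconvergence} also $\|u_j^{*}-u^{*}\|_{\infty}\to 0$.

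Next I would fix an arbitrary compact interval $[c,d]\subset D$ and carry out the localization. By continuity and compactness there is $c_0>0$ with $u^{*}-u\ge c_0$ on $[c,d]$, and combining the two uniform convergences above gives $u_j^{*}-u_j\ge c_0/2>0$ on $[c,d]$ for all large $j$; hence $[c,d]\subset D_j:=\{u_j^{*}>u_j\}$ eventually. Since the \emph{subharmonicity property} makes $u_j^{*}$ subharmonic, and thus convex in dimension one, on the open set $D_j$, and since the connected set $[c,d]$ lies inside a single connected component of $D_j$, the function $u_j^{*}$ is convex on $[c,d]$ for all large $j$. Applying Lemma \ref{convexityconvergence} to the tail of $\{u_j^{*}\}$ with the constant endpoints $l_j=c$, $r_j=d$ (together with $u_j^{*}\to u^{*}$ uniformly) then yields $(u_j^{*})'(x)\to(u^{*})'(x)$ for a.e. $x\in(c,d)$.

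Finally I would exhaust $D$ by countably many compact intervals, writing each component $(a_i,b_i)$ of $D$ as $\bigcup_{m}[a_i+\tfrac{1}{m},\,b_i-\tfrac{1}{m}]$; on each such interval the pointwise convergence of the derivatives holds off a null set, and since a countable union of null sets is null, we conclude that $(u_j^{*})'(x)\to(u^{*})'(x)$ for a.e. $x\in D$. The only delicate point is the localization: one must extract a uniform positive gap $u^{*}-u\ge c_0$ on compact subsets of $D$ to guarantee $[c,d]\subset D_j$ for large $j$, and observe that convexity on the components of $D_j$ restricts to convexity on $[c,d]$. Everything after that is a direct appeal to Lemma \ref{convexityconvergence} and a countable-union argument.
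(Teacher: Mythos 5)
Your proposal is correct and is precisely the ``simpler proof using the \emph{subharmonicity property}'' that the paper's one-line argument points to: it localizes to compact subintervals of $D$, shows they eventually lie in $D_j$ so that $u_j^{*}$ is convex there, and then applies Lemma \ref{convexityconvergence} together with a countable exhaustion of $D$. You have simply written out in full the details the paper leaves implicit, and every step (the uniform gap $u^{*}-u\ge c_0$ on compacts, the inclusion $[c,d]\subset D_j$ for large $j$, and the null-set union) is sound.
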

\begin{proof}
Follows by an adaptation of  \cite[Lemmas 5 and 13]{CMP2017}. A simpler proof using the {\it subharmonicity property} follows by Lemma \ref{convexityconvergence} above.
\end{proof}
\section{Novel tools}

In this section we develop new additional tools to address the continuity problem.
Let us take $\varepsilon>0$, consider $v_{\varepsilon}=\displaystyle \sum_{i=1}^{N}\alpha_{i}\chi_{(a_i,a_{i+1})}$, such that $\|u'-v_{\varepsilon}\|_{1}<\varepsilon$. That is, we approximate the derivative of our limit function by a simple function. We define, as usual $D_{j}:=\{x\in \mathbb{R}; u_j^{*}(x)>u_j(x)\}$. We know that $D_j$ is an open set, we write it as an union of intervals in a convenient way that depends on our approximation $v_{\varepsilon}.$ That is $$D_{j}=D_{j}^{1}\cup \bigcup_{i=0}^{N+1}D_{j}^{2,i},$$
where $D_{j}^{1}$ is the union of the intervals contained in $D_j$ that contain at least one element of the set $\{a_{1},\dots,a_{N+1}\}$. The sets $D_{j}^{2,i}$, for $i=1,\dots,N$ are the union of intervals contained in $(a_{i},a_{i+1}),$ and $D_{j}^{2,0}$ and $D_{j}^{2,N+1}$ are the union of the intervals contained in $D_{j}$ that are contained in $(-\infty,a_{1})$ and $(a_{N+1},\infty)$, respectively. We write $$D_{j}^{1}=\bigcup_{r=1}^{N+1}(c_{r}(j),d_{r}(j)),$$ where $(c_r(j),d_r(j))\ni a_r$ (possibly some intervals are empty or the same) and $$D_{j}^{2,i}=\bigcup_{k=1}^{\infty}(c_{k}^{i}(j),d_{k}^{i}(j)).$$ 
The heart of our proof is the following lemma, where we prove that in the sets $D_{j}^{2,i}$ the function $u_{j}^{*}$ is close to $u_j$ at the derivative level. In the proof the {\it subharmonicity property} plays a major role. 
\begin{lemma}\label{boundingbiggroups}
If $\|u'-u_j'\|_{1}<\varepsilon$ we have that  $$\int_{\bigcup_{i=0}^{N+1}D_{j}^{2,i}}\left|(u_{j}^{*})'-u_j'\right|<4\varepsilon.$$
\end{lemma}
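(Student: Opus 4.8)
The plan is to exploit the \emph{subharmonicity property} on each of the component intervals of $\bigcup_{i=0}^{N+1}D_{j}^{2,i}$, taking advantage of the fact that these intervals are, by construction, contained in a region where the derivative of $u_j$ is well controlled by a single constant. The key structural observation is that each interval $(c,d)$ appearing in some $D_j^{2,i}$ lies entirely inside one of the slabs $(a_i,a_{i+1})$ (or in $(-\infty,a_1)$, $(a_{N+1},\infty)$), on which $v_\varepsilon$ is the constant $\alpha_i$. Moreover, because these intervals contain none of the breakpoints $a_1,\dots,a_{N+1}$, both endpoints are genuine contact points, i.e. $u_j^{*}(c)=u_j(c)$ and $u_j^{*}(d)=u_j(d)$ (when $d=\infty$ one uses instead the decay of $u_j^{*}-u_j$ at infinity).

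First I would fix one such interval $(c,d)\subset (a_i,a_{i+1})$ and use the convexity of $u_j^{*}$ there (which the subharmonicity property grants in one dimension) to bound $\int_{(c,d)}|(u_j^{*})'|$. Since $u_j^{*}$ is convex and equals $u_j$ at both endpoints, its graph lies below the chord, so its minimum is attained at some interior point; the total variation of $u_j^{*}$ over $(c,d)$ is then
\est{
\int_{(c,d)}\left|(u_j^{*})'\right|
= \bigl(u_j^{*}(c)-u_j^{*}(m)\bigr)+\bigl(u_j^{*}(d)-u_j^{*}(m)\bigr)
\le \bigl(u_j(c)-u_j(m)\bigr)+\bigl(u_j(d)-u_j(m)\bigr),
}
where $m$ is the minimizer and the last inequality uses $u_j^{*}\ge u_j$ together with the endpoint equalities. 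The right-hand side is at most $\int_{(c,d)}|u_j'|$. Summing over all component intervals of $\bigcup_{i=0}^{N+1}D_j^{2,i}$ then yields
\est{
\int_{\bigcup_{i=0}^{N+1}D_j^{2,i}}\left|(u_j^{*})'\right|
\le \int_{\bigcup_{i=0}^{N+1}D_j^{2,i}}\left|u_j'\right|.
}

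Next I would convert this bound on $(u_j^{*})'$ alone into the desired bound on the \emph{difference} $(u_j^{*})'-u_j'$. The natural route is to compare both derivatives against the constant $\alpha_i$ on each slab: on $(c,d)\subset(a_i,a_{i+1})$ one writes $(u_j^{*})'-u_j' = \big((u_j^{*})'-\alpha_i\big)-\big(u_j'-\alpha_i\big)$ and estimates each piece. The key point is that $\int_{(c,d)}|u_j'-\alpha_i|$ is small in aggregate: summing over all intervals and all slabs gives at most $\|u_j'-v_\varepsilon\|_1 \le \|u_j'-u'\|_1+\|u'-v_\varepsilon\|_1 < 2\varepsilon$. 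For the term involving $(u_j^{*})'$, one combines the total-variation bound above with the convexity to control the deviation from the constant slope; here convexity forces $(u_j^{*})'$ to be monotone on $(c,d)$, so its oscillation is captured by the endpoint differences of $u_j$, which are again governed by $\int|u_j'-\alpha_i|$. Assembling these contributions and tracking the constants should produce a total bound of $4\varepsilon$.

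The main obstacle I anticipate is the bookkeeping for the term $\int |(u_j^{*})'-\alpha_i|$: one must pass from the telescoping total-variation identity for the convex function $u_j^{*}$ to a genuine comparison with the constant $\alpha_i$, and this requires carefully relating the interior minimizer $m$ and the one-sided slopes of $u_j^{*}$ back to increments of $u_j$ on the \emph{same} subintervals, so that everything collapses into $\|u_j'-v_\varepsilon\|_1$-type quantities without double-counting across the (possibly infinitely many) component intervals. The infinite union in each $D_j^{2,i}$ means the summation must be justified by absolute convergence, which follows from the fact that the right-hand sides are dominated by $\int_{(a_i,a_{i+1})}|u_j'|<\infty$; the subharmonicity property is exactly what makes each local contribution nonnegative and summable in this controlled way.
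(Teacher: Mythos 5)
Your approach is essentially the paper's: decompose $D_j^{2,i}$ into its component intervals, use that both endpoints of each component are contact points, and exploit the convexity of $u_j^{*}$ (from subharmonicity) to compare total variations, measuring everything against the constant slope $\alpha_i$ so that the final bound collapses to $2\|u_j'-v_\varepsilon\|_1<4\varepsilon$. The one step you flag as the ``main obstacle''--- passing from your $\alpha_i=0$ computation to a genuine comparison with the slope $\alpha_i$ --- is not actually an obstacle: let $L_i$ be a line with $L_i'\equiv\alpha_i$ and run your displayed argument verbatim on $u_j^{*}-L_i$ versus $u_j-L_i$, since subtracting a linear function preserves the convexity of $u_j^{*}$ on $(c,d)$, the inequality $u_j^{*}-L_i\ge u_j-L_i$, and the equality at the endpoints; this yields $\int_{(c,d)}|(u_j^{*}-L_i)'|\le\int_{(c,d)}|(u_j-L_i)'|=\int_{(c,d)}|u_j'-\alpha_i|$ directly, and then the triangle inequality $|(u_j^{*})'-u_j'|\le|(u_j^{*}-L_i)'|+|(u_j-L_i)'|$ gives $2\int_{(c,d)}|u_j'-\alpha_i|$ per component, exactly as in the paper. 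With that observation your proof is complete and there is no delicate bookkeeping of one-sided slopes or minimizers left to do.
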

\begin{proof}
Let us define $a_0:=-\infty$, $a_{N+2}:=\infty$ and $\alpha_0:=0=:\alpha_{N+1}$.
Let us see that, for $i=0,\dots,N+1$, $$\int_{\cup_{k=1}^{\infty}(c_k^{i}(j),d_{k}^{i}(j))}\left|(u_{j}^{*})'-u_j'\right|<2\int_{(a_i,a_{i+1})}\left|u_{j}'-\alpha_i\right|,$$ from where the result follows since $$\sum_{i=0}^{N+1}\int_{(a_i,a_{i+1})}\left|u_{j}'-\alpha_i\right|< \varepsilon+\sum_{i=0}^{N+1}\int_{(a_i,a_{i+1})}\left|u'-\alpha_i\right|<2\varepsilon.$$Indeed, consider $L_{i}:(a_i,a_{i+1})\to \mathbb{R}$ a line with $L_{i}'(x)=\alpha_i$ for all $x$ and $i=0,\dots,N+1$. Then we observe that \begin{align*}
\int_{(c_{k}^{i}(j),d_{k}^{i}(j))}\left|(u_{j}^{*})'-u_j'\right|&=\int_{(c_{k}^{i}(j),d_{k}^{i}(j))}\left|(u_{j}^{*}-L_{i})'-(u_j-L_{i})'\right|\\
&\le \int_{(c_{k}^{i}(j),d_{k}^{i}(j))}\left|(u_{j}^{*}-L_{i})'\right|+\left|(u_j-L_{i})'\right|.
\end{align*}
 
At this point, note that \begin{align}\label{7deabril}
\int_{(c_{k}^{i}(j),d_{k}^{i}(j))}\left|(u_{j}^{*}-L_{i})'\right|\le \int_{(c_{k}^{i}(j),d_{k}^{i}(j))}\left|(u_{j}-L_{i})'\right|.\end{align} In fact, since $u_j^{*}$ is convex in $(c_{k}^{i},d_{k}^{i})$ we have that $u_j^{*}-L_{i}$ is also convex in that interval, therefore $u_{j}^{*}-L_{i}$ has no local maxima in that interval, considering that $u_{j}^{*}-L_{i}\ge u_{j}-L_{i}$ and that they coincide at the endpoints of the interval we conclude the claim. Now since $\left|(u_{j}-L_{i})'\right|=\left|u_j'-\alpha_i\right|$
we conclude our lemma.
\end{proof}
Now, we need to control the (finitely many) remaining intervals in $D_j$.
\begin{lemma}\label{finitelymany}
We have that $$\int_{\cup_{r=1}^{N+1}(c_{r}(j),d_{r}(j))}\left|(u_j^{*})'-(u^{*})'\right|\to 0.$$
\end{lemma}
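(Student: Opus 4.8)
The plan is to reduce to a single interval, localize to a compact set, and then exploit convexity through the total variation of convex functions together with Scheff\'e's lemma. Since there are at most $N+1$ intervals $(c_r(j),d_r(j))$ and the integrand is nonnegative, it suffices to prove that for each fixed $r$ one has $\int_{(c_r(j),d_r(j))}\left|(u_j^{*})'-(u^{*})'\right|\to 0$. Fix $\eta>0$. By Lemma \ref{compactness} I may choose $R>0$ (with all the points $a_1,\dots,a_{N+1}$ inside $(-R,R)$) and $j$ large so that $\int_{[-R,R]^c}\left(|(u_j^{*})'|+|(u^{*})'|\right)<\eta$; hence it is enough to control the truncated interval $I_r(j):=(c_r(j),d_r(j))\cap(-R,R)$, an interval containing $a_r$ with endpoints in $[-R,R]$. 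Working along subsequences (it suffices that every subsequence admits a further subsequence along which the integral vanishes), I extract $\ell_j\to\gamma_r$ and $\rho_j\to\delta_r$, where $\ell_j,\rho_j$ are the endpoints of $I_r(j)$ and $-R\le\gamma_r\le a_r\le\delta_r\le R$.

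The key point is that $u_j^{*}$ is convex on $(c_r(j),d_r(j))\supseteq I_r(j)$ by the subharmonicity property, so $(u_j^{*})'$ is monotone and I may use the total variation identity
\[
\int_{I_r(j)}|(u_j^{*})'|=u_j^{*}(\ell_j)+u_j^{*}(\rho_j)-2\min_{[\ell_j,\rho_j]}u_j^{*}.
\]
By Proposition \ref{uniformconvergence} (and $\|u_j-u\|_{\infty}\le\|u_j-u\|_{1,1}$) we have $u_j^{*}\to u^{*}$ uniformly; combined with $\ell_j\to\gamma_r$, $\rho_j\to\delta_r$ and the continuity of $u^{*}$, each term on the right converges, the minima converging to $\min_{[\gamma_r,\delta_r]}u^{*}$. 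Since $u^{*}$ is convex on $(\gamma_r,\delta_r)$ by Lemma \ref{convexityconvergence}, the same identity for $u^{*}$ yields
\[
\int_{I_r(j)}|(u_j^{*})'|\longrightarrow u^{*}(\gamma_r)+u^{*}(\delta_r)-2\min_{[\gamma_r,\delta_r]}u^{*}=\int_{(\gamma_r,\delta_r)}|(u^{*})'|.
\]
This also covers the degenerate case $\gamma_r=\delta_r$, where both sides vanish, so collapsing intervals require no separate treatment.

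With the convergence of the $L^{1}$-norms in hand, I upgrade it to $L^{1}$ convergence of the derivatives. Lemma \ref{convexityconvergence} gives $(u_j^{*})'\to(u^{*})'$ a.e. on $(\gamma_r,\delta_r)$. Setting $h_j:=(u_j^{*})'\mathbf 1_{I_r(j)}$ and $h:=(u^{*})'\mathbf 1_{(\gamma_r,\delta_r)}$, one checks $h_j\to h$ a.e. and $\|h_j\|_{1}\to\|h\|_{1}<\infty$, so Scheff\'e's lemma yields $\|h_j-h\|_{1}\to 0$. Finally, since $\big|\,|(u_j^{*})'-(u^{*})'|-|(u_j^{*})'|\,\big|\le|(u^{*})'|$ on the symmetric difference $I_r(j)\,\triangle\,(\gamma_r,\delta_r)$, whose measure tends to $0$, the absolute continuity of the integral of $|(u^{*})'|$ gives
\[
\int_{I_r(j)}|(u_j^{*})'-(u^{*})'|\le\|h_j-h\|_{1}+\int_{I_r(j)\,\triangle\,(\gamma_r,\delta_r)}|(u^{*})'|\longrightarrow 0 .
\]
Summing over the finitely many $r$ and letting $\eta\to 0$ concludes.

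I expect the main obstacle to be the moving, and possibly escaping, endpoints $c_r(j),d_r(j)$: a priori there is no pointwise control on $(u_j^{*})'$ near them, so a direct domination argument fails. The resolution is precisely that convexity converts this troublesome boundary behavior into boundary \emph{values} via the total variation identity, which is stable under uniform convergence; the compactness reduction of Lemma \ref{compactness} tames the escaping endpoints, while Scheff\'e's lemma promotes norm convergence together with a.e. convergence into the desired $L^{1}$ convergence.
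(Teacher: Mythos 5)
Your proof is correct and follows essentially the same route as the paper: extract subsequences so the endpoints converge, use convexity to express $\int|(u_j^{*})'|$ via boundary values and the minimum, pass to the limit by uniform convergence, and then combine a.e.\ convergence of the derivatives with convergence of the $L^{1}$-norms (the paper invokes Brezis--Lieb where you invoke Scheff\'e, which is the same mechanism here). Your extra truncation to $[-R,R]$ and the explicit handling of the symmetric difference of the moving intervals are harmless refinements of details the paper treats more briefly.
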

\begin{proof}
Assume that there exists, for some $r$, an $\varepsilon_{2}>0$ such that $\int_{(c_{r}(j),d_{r}(j))}\left|(u_j^{*})'-(u^{*})'\right|>\varepsilon_{2}$ for a subsequence of $j$ (that we also index by $j$). Let us take a subsequence such that $c_{r}(j)\to c_r,$ $d_{r}(j)\to d_r$ when $j\to \infty$ (possibly $c_r=-\infty$ or $d_r=+\infty$). Then, Lemma \ref{convexityconvergence} implies that $u^{*}$ is convex in $(c_r,d_r)$ and that $(u_j^{*})'\to (u^{*})'$ a.e in $(c_r,d_r)$. Therefore, by the Brezis-Lieb lemma we just need to prove that $$\underset{j\to \infty}{\lim} \int_{(c_{r}(j),d_{r}(j))}\left|(u_j^{*})'\right|=\int_{(c_r,d_r)}\left|(u^{*})'\right|.$$ If we write $m_r(j)=\underset{x\in (c_r(j),d_r(j))}{\min} u_j^{*}(x)$ and $m_r=\underset{x\in (c_r,d_r)}{\min}u^{*}(x)$, we have that $$\int_{(c_{r}(j),d_{r}(j))}\left|(u_j^{*})'\right|=u_j^{*}(c_r(j))-2m_{r}(j)+u_j^{*}(d_r(j))$$ and $$ \int_{(c_r,d_r)}\left|(u^{*})'\right|=u^{*}(c_r)-2m_r+u^{*}(d_r).$$ Therefore the desired convergence is a consequence of the uniform convergence and the continuity of $u^{*}$. This concludes the proof of the lemma.   
%Assume that the endpoints $a_{k},\dots, a_{l}$ are the ones contained in $(c_{r},d_{r})$ (this set depends on $j$). We observe that for $k\le s<l$, if we construct $L_{s}_(a_{s},a_{s+1})\to \mathbb{R}$ such that $L_s$ is a line and   as in the previous proof we have $$\int_{(a_s,a_{s+1})}|(M_{\phi}f_j)'-(M_{\phi}f)'|\le \int_{(a_s,a_[s+1})}$$
\end{proof}
\section{Proof of Theorem \ref{1}} 
With the tools developed in the last section we are in position to prove our theorem. 
First, we claim the following:
\begin{align}\label{claimproof}
\underset{j\to \infty}{\lim}\int_{D_j\cap C}\left|(u_j^{*})'-(u^{*})'\right|\to 0.
\end{align}

Noticing that $u'=(u^{*})'$ a.e. in the set of integration we have \begin{align*}
\int_{D_j\cap C}\left|(u_j^{*})'-(u^{*})'\right|&=\int_{D_j\cap C}\left|(u_j^{*})'-u'\right|\\
&\le \int_{D_j}\left|(u_j^{*})'-u'\right|\\
&\le \int_{D_{j}^{1}}\left|(u_j^{*})'-u'\right|+\int_{\cup_{i=0}^{N+1}D_{j}^{2,i}}\left|(u_j^{*})'-u'\right|\\
&\le \int_{D_{j}^{1}}\left|(u_j^{*})'-u'\right|+\int_{\cup_{i=0}^{N+1}D_{j}^{2,i}}\left|(u_j^{*})'-u_j'\right|+\|u'-u_j'\|_1\\
&\le  \int_{D_{j}^{1}}\left|(u_j^{*})'-u'\right|+5\varepsilon,  
\end{align*}
for $j$ big enough, where we use Lemma \ref{boundingbiggroups} in the last line. Since $$\int_{D_{j}^{1}}\left|(u_j^{*})'-u'\right|<\varepsilon$$ for $j$ big enough by the Lemma \ref{finitelymany}, we have  $$\underset{j\to \infty}{\limsup} \int_{D_j\cap C}\left|(u_j^{*})'-(u^{*})'\right|\le 6\varepsilon.$$ Since $\varepsilon>0$ is arbitrary we conclude the proof of our claim \eqref{claimproof}.

From \eqref{claimproof} and since $$\int_{C_{j}\cap C}\left|(u_j^{*})'-(u^{*})'\right|=\int_{C_{j}\cap C}\left|u_{j}'-u'\right|\to 0,$$ we conclude that 
\begin{align}\label{convergenceinC}
\int_{C}\left|(u_j^{*})'-(u^{*})'\right|\to 0.   
\end{align}
Now, in order to prove our Theorem \ref{1} we need to conclude that $$\int_{D}\left|(u_j^{*})'-(u^{*})'\right|\to 0.$$ Indeed, in light of Lemma \ref{pointwisederivative}, by the Brezis-Lieb lemma we only require that $$\int_{D}\left|(u_j^{*})'\right|\to \int_{D}\left|(u^{*})'\right|,$$ and this is a consequence of \eqref{convergenceinC} and Proposition \ref{grk}. This concludes the proof of Theorem \ref{1}.
%\section{Proof of Corollary \ref{corolario}}
%It is possible to conclude that for $\phi$ {\it admissible} the function $u^{*}$ is subharmonic in the set $$\{u^{*}>u\},$$ whenever $u\in L^{1}(\mathbb{R}^d)$ is continuous. We notice that to adapt \cite[Lemma 8]{CS2013}  we use \cite[Theorem 3.1]{Gilbarg2001} and the remark thereafter since $u(\cdot,t):=u*\phi_{t}(\cdot)\in C(\mathbb{R}^d\times [0,\infty))$ and the coefficients $b_i$ are continuous in $\mathbb{R}^d\times (0,\infty)$ and thus locally bounded.  
%The particular cases of the Poisson, heat flow maximal function follow by combining Theorem \ref{1} and \cite[Lemma 8 and 12]{CS2013}, since the subharmonicity there established implies in the one-dimensional case the {\it convexity property} previously discussed. 
%For the case concerning $\varphi^{\alpha,d}$ we observe that, according to \cite[Section 2.4]{doi:10.1080/03605300600987306}, the function $u(\cdot,t):=u*\varphi^{\alpha,d}_{t}$ solves the Cauchy problem \ref{equation} where $$Lu=\bigtriangleup_{x}u+\frac{\alpha}{t}u_t+u_{tt},$$ that satisfies our requirements. Therefore $\varphi^{\alpha,1}$ is admissible and by the previous discussion the maximal function $u^{*}$ is subharmonic in the set $\{u^{*}>u\}.$ Since therefore in the one-dimensional case the kernel $\varphi^{\alpha,1}$ satisfies the {\it convexity property}, we conclude this proof.
\subsection{Concluding remarks}
The same scheme of proof presented here allows one to establish the analogous of Theorem \ref{1} for a more general class of maximal operators of convolution type. The key properties that we require are that the maximal function $u^{*}$ is continuous and has the {\it subharmonicity property}, and one has to then deal with minor technicalities that might appear (and for simplicity we do not enter in all such variations). For instance, one could consider the operators defined in \cite[Section 1.2]{CFS2015}, in which the approximation of the identity are slightly different than \eqref{approximationofidentity}.  
\section{Acknowlegdements}
The author is grateful to E. Carneiro for the encouragement and very helpful discussions. The author is also thankful to M. Sousa for helpful comments about earlier versions of this manuscript.  
\bibliography{Reference}
\bibliographystyle{amsplain} 
\end{document}